\newtheorem{theorem} {Theorem}[section]
\newtheorem{Definition} {Definition}[section]
\newtheorem{Corollary} {Corollary}[section]
\numberwithin{equation}{section}
\newenvironment{proof}{ {\setlength {\parindent} {0 pt} \bf Proof\ } }{ \hfill {$\square$} \par\vskip 7 pt}
\date{}
\title{Certain Family of Some Beta Distributions Arising from Distribution of Randomly Weighted Average}
\author{Rasool Roozegar\thanks{Corresponding: rroozegar@yazd.ac.ir }\\
\noindent \footnotesize{Department of Statistics, Yazd University, P.O. Box 89195-741, Yazd, Iran} }
\date{}
\begin{document}
\maketitle \thispagestyle{empty}

\begin{abstract}
We give the exact distribution of the average of $n$ independent beta random variables weighted by the selected cuts of $(0,1)$ by the order statistics of a random sample of size $n-1$ from the uniform distribution $U(0,1)$, for each $n$. A new integral transformation that is similar to generalized Stieltjes transform is given with various properties. The result of Soltani and Roozegar [On distribution of randomly ordered uniform incremental weighted averages: Divided difference approach. Statist Probab Lett. 2012;82(5):1012–1020] with this new transform and also integral representation of the Gauss-hypergeometric function in some parts are employed to achieve the exact distribution. Several examples of the new family are investigated.\\
\newline {\it Keywords and phrases:} Selected order statistics; Beta distribution. Additive Stieltjes transform; Gauss-hypergeometric function; Randomly weighted average, exact distribution.
\newline {\it 2010 AMS Subject Classification:} 60E05, 46F12, 62E15.
\end{abstract}

\section{Introduction}

Let $U_{(1)}< U_{(2)}< \ldots < U_{(n-1)}$ be order statistics based on a random sample of size $n-1$ from the uniform distribution $U(0,1)$, $U_{(0)}=0$ and $U_{(n)}=1$. Randomly weighted average (RWA) of independent and continuous random variables $X_{1},\ldots,X_{n}$ with respective distribution functions $F_{1},\ldots,F_{n}$, is defined by
\begin{equation}
S_n=R_{1}X_{1}+R_{2}X_{2}+\cdots +R_{n}X_{n},\ \ \ n\geq 2,
\end{equation}
where the proportions $R_{i}=U_{(i)}-U_{(i-1)},$ $i=1,\ldots ,n-1$ and $R_{n}=1-\sum_{i=1}^{n-1}R_{i}$ are random weights. Suppose that among the above whole order statistics, we select $k-1$ order statistics $U_{(n_1)}< U_{(n_2)}< \ldots < U_{(n_{k-1})}$, where $k=k(n)$, $2‎\leq k ‎\leq n‎‎$ and $n_0=0< n_1< n_2< ‎\cdots‎< n_{k-1}< n_k=n ‎$. A  general form to $S_n$ will be the RWA of $k$ independent and continuous random variables $X_1, ‎\ldots, X_k‎$, denoted by $S_{n: n_1,‎\ldots, n_{k-1}‎}$, which is given by
\begin{equation}
S_{n: n_1,‎\ldots, n_{k-1}‎}=\sum_{j=1}^{k} V_{j} X_{j},
\end{equation}
where the random weights $V_j$ are defined by
$$V_{j}=U_{(n_j)}-U_{(n_{j-1})},\ \ \ \ j=1,2,‎\ldots, k.$$
In the above we have put the conventions $U_{(n_0)}=0$, $U_{(n_k)}=1$ and $r_j=n_j - n_{j-1},\ j=1,2,‎\ldots,k‎$. Then $U_{(n_j)}=\sum_{i=1}^{j} V_i$ and $V_j=\sum_{i=n_{j-1}+1}^{n_j} R_i$.
The RWA on the form (1.2) was defined and studied by \cite{Soltani}. In fact the random vector $‎\textbf{V}‎=(V_1, V_2, ‎\ldots, V_k‎)$ has the Dirichlet distribution, $Dir (r_1, r_2, ‎\ldots, r_k‎)$, with the probability density function (pdf)
\begin{equation*}
f_{‎\textbf{V}‎}(v_1,‎\ldots, v_k‎)=‎\dfrac{‎\Gamma‎ (n)}{\prod‎_{j=1}^{k} \Gamma‎ (r_j)‎}‎ \prod_{j=1}^{k-1} v_j^{r_j} (1-\sum_{j=1}^{k-1} v_j)^{r_k},
\end{equation*}
at any point in the canonical simplex $\{(v_1,‎\ldots, v_k‎) | v_i‎\geq 0, \ i=1,2,‎\ldots, k, \ \sum_{j=1}^{k} v_j=1‎‎ \}$ in the $(k-1)$-dimensional real space $\mathbb{R}^{k-1}$ and zero outside.

To state assertions, we introduce beta distributions, $\tilde{b‎}eta(p,q‎‎)$‎, over $[a,b]$‎ by the pdf 
\begin{equation*}
f_{‎p,q}(x‎)=‎\left\{
\begin{array}{cc}
‎\dfrac{1}{B(p,q) (b-a)^{p+q-1}}‎ (x-a)^{p-1} (b-x)^{q-1} &
\mathrm{if}~ a<x<b, \\
0 &
\mathrm{other wise,}%
\end{array}%
\right.
\end{equation*}
for $ p, q>0$ and $B(p,q)=‎\dfrac{‎\Gamma‎ (p) ‎\Gamma (q)‎}{‎\Gamma‎ (p+q)}‎$ is beta function.\\
Let us denote the usual beta distribution over $[0,1]$ by $beta(p,q)$ with above pdf when $a=0$ and $b=1$. The power semicircle distribution with parameters $\theta$ and $\sigma$ on $(-\sigma, \sigma)$, denoted by $PS(\theta, \sigma)$, is a special case of beta distribution whenever $p=q=\theta+3/2‎$, $b=\sigma$ and $a=-\sigma$. We recall that $PS(-1, \sigma)$ is arcsine distribution on $(-\sigma, \sigma)$, for $\theta=-1/2$ the power semicircle density is the uniform on $(-\sigma, \sigma)$ and for $\theta=0$, it is the semicircle (Wigner) density on $(-\sigma, \sigma)$.

The ordinary Stieltjes transform (ST) and generalized Stieltjes transform (GST) of a distribution $H$ are respectively defined by 
\begin{equation*}
\mathcal{S}[H](z)=\int_{\mathbb{R}}\frac{1}{z-x}dH(x),\text{ \  \ }z\in \mathbb{C}\cap (\mbox{supp} \mathit{H})^{c},
\end{equation*}
and
\begin{equation*}
\mathcal{S}[H; \rho](z)=\int_{\mathbb{R}}\frac{1}{(z-x)^{\rho }}dH(x),\ \ z\in \mathbb{C}\cap (\mbox{supp} \mathit{H})^{c},\ \rho >0,
\end{equation*}
where $\mathbb{C}$ is the set of complex numbers, supp$H$ stands for the support of $H$ and $\rho$ is a constant. For more on the ST and GST, see \cite{Debnath}.

Soltani and Roozegar \cite{Soltani} effectively apply certain results in divided differences and derive the useful relation between GST of $F_{S_{n: n_1,‎\ldots, n_{k-1}}}$ and GSTs of $F_{1},\ldots,F_{n}$; more is given there:
\begin{equation}
\mathcal{S}[F_{S_{n: n_{1},...,n_{k-1}}}; n](z)=\prod_{i=1}^{k}
\mathcal{S}[F_{i}; r_i](z),\text{ \ \ \ }z\in \mathbb{C}{\ }{\bigcap_{i=1}^{k}}(\mbox{supp}\text{ }\mathit{F_{i}})^{c}.
\end{equation}
In particular
\begin{equation*}
\mathcal{S}[F_{S_{n}}; n](z)=\prod_{i=1}^{n}\mathcal{S}[F_{i}](z),\text{ \ \ \ }z\in \mathbb{C}{\ }{\bigcap_{i=1}^{n}}(\mbox{supp}\text{ }\mathit{F_{i}})^{c}.
\end{equation*}
The Gauss-hypergeometric function $F_{D}^{(1)}$, is defined by the series
\begin{equation*}
F_{D}^{(1)}(c,a;b;z)=\sum_{n=0}^{\infty }\frac{(c)_{n}(a)_{n}}{(b)_{n}n!}z^{n},
\end{equation*}
where $(a)_{0}=1$ and $(a)_{n}=a(a+1)(a+2)\cdots (a+n-1),$ $n\geq 1,$ denotes the rising factorial. Gauss-hypergeometric function
$F_{D}^{(1)}$ has the Euler's integral representation of the form
\begin{equation}
F_{D}^{(1)}(c,a;b;z)=\frac{\Gamma (b)}{\Gamma (a)\Gamma (b-a)}\int_{0}^{1}\frac{t^{a-1}(1-t)^{b-a-1}}{(1-zt)^{c}}dt.
\end{equation}
For more details on Gauss-hypergeometric function and its properties, see \cite{Abramowitz} and \cite{Andrews}.

There are few examples of known distributions that are distributions of RWA on the form (1.1), $S_n$, and there is not any examples, to the best of our knowledge, that are distributions of RWA on the form (1.2), $S_{n: n_{1},...,n_{k-1}}$. Roozegar and Soltani \cite{Roozegar} used the investigations of \cite{Demni} and \cite{Kubo} on the connection between ST and GST of two distributions and introduce new classes of power semicircle laws that are RWA $S_n$ distributions. In this paper we introduce new classes of RWA $S_{n: n_{1},...,n_{k-1}}$ distributions.\\
Rest of the paper is organized as follows. In Section 2 we introduce a new transform similar to GST for later analysis. We rewrite the main result of \cite{Soltani} based on this new transform as a Theorem 2.2 in this section. Two new classes of RWA $S_{n: n_{1},...,n_{k-1}}$ distributions are investigated in Section 3. In Section 4, we present some examples of the new classes.

\section{New transform and RWA distribution}

First, we define a new univariate characteristic function called an additive Stieltjes transform (AST).
\begin{Definition}
If $X$ is a random variable with distribution $H$ on a subset $S$ of $A=[-a,a]$, $a>0$, its AST is defined as
\begin{eqnarray*}
\mathcal{AS}[H; d](z) &=& E_X [‎\dfrac{1}{(1-z X)^d}‎] \\
&=& \int_{S} ‎\frac{1}{(1-z x)^d} ‎dH(x), \ \ |z|<‎\dfrac{1}{a}‎,
\end{eqnarray*}
where $d$ is a positive real number.
\end{Definition}
The assumptions that $d$ is positive and $H$ has a support in $S$ are needed for the one to one correspondence between AST and $H$ in next theorem.\\
We have the following relationship between the AST and the GST of a distribution $H$,
\begin{equation}
\mathcal{AS}[H; d](z)=\frac{1}{z^{d}}\mathcal{S}\left[ H;d \right] (\frac{1}{z}).
\end{equation}
\begin{theorem}
For distributions $H_1$ and $H_2$ in a subset $S$ of $A=[-a,a]$ and any positive real number $d$, if we have
\begin{equation}
\mathcal{AS}[H_1; d](z)= \mathcal{AS}[H_2; d](z),
\end{equation}
for all $|z|<‎\dfrac{1}{a}$, then $H_1=H_2$.
\end{theorem}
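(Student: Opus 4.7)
\bigskip

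\textbf{Proof plan.} My plan is to reduce the claim to uniqueness of distributions on a compact set via their moments. The key observation is that for $|z|<1/a$ and $|x|\le a$ the binomial series
\[
(1-zx)^{-d}=\sum_{n=0}^{\infty}\frac{(d)_n}{n!}z^n x^n
\]
converges, and for fixed $z$ with $|z|\le r<1/a$ we have $|zx|\le ra<1$ uniformly for $x\in S$. First I would fix such an $r$, note the uniform bound, and invoke dominated convergence (or uniform convergence of the series against $|H|$) to swap sum and integral. This yields the power series representation
\[
\mathcal{AS}[H;d](z)=\sum_{n=0}^{\infty}\frac{(d)_n}{n!}\,\mu_n(H)\,z^n,\qquad \mu_n(H):=\int_S x^n\,dH(x),
\]
valid on the disc $|z|<1/a$.

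Next, from the hypothesis (2.2) the two power series agree on an open neighborhood of $0$, so by the uniqueness of Taylor coefficients of holomorphic functions their coefficients coincide. Because $d>0$, the Pochhammer symbol $(d)_n$ is strictly positive for every $n\ge 0$, so one may cancel the factor $(d)_n/n!$ and conclude
\[
\mu_n(H_1)=\mu_n(H_2)\qquad\text{for every } n\ge 0.
\]

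Finally, since both $H_1$ and $H_2$ are supported in the compact set $S\subseteq[-a,a]$, they are determined by their moment sequences (this is the classical Hausdorff moment problem: compactly supported measures are moment-determinate, which follows from the Weierstrass approximation theorem applied to continuous test functions). Therefore $H_1=H_2$.

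The only delicate point in the argument is the justification of the termwise integration, which is why I would begin by restricting to a closed sub-disc $|z|\le r<1/a$ and securing a uniform bound $|zx|\le ra<1$ for $x\in S$; the remaining steps (matching coefficients and invoking moment determinacy on a compact support) are standard. As an alternative route, one could instead apply the identity (2.1) to transfer the equality (2.2) into an equality of generalized Stieltjes transforms on $|w|>a$, and then quote the known injectivity of the GST; but the moment-based route above is completely self-contained and also explains transparently why the positivity assumption on $d$ is used.
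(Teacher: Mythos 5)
Your proof follows essentially the same route as the paper's: expand $(1-zx)^{-d}$ as a power series, match coefficients (using $d>0$ so the Pochhammer factors are nonzero) to equate all moments, and conclude by moment-determinacy of compactly supported distributions via Weierstrass approximation. Your version is in fact slightly more careful, since you explicitly justify the interchange of sum and integral on a closed sub-disc, a step the paper takes for granted.
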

\begin{proof}
Since $d$ is a real number and $|zx|<1$, we have
\begin{equation}
(1-zx)^{-d}=\sum_{m=0}^{\infty}‎\dfrac{‎\Gamma‎ (d+m)}{‎\Gamma‎ (d) m!}‎ (zx)^m.
\end{equation}
By Definition 2.1 and equations (2.2) and (2.3), we have
\begin{equation}
\sum_{m=0}^{\infty}‎\dfrac{‎\Gamma‎ (d+m)}{‎\Gamma‎ (d) m!}‎ z^m \int_{S} x^{m} dH_1(x)= \sum_{m=0}^{\infty}‎\dfrac{‎\Gamma‎ (d+m)}{‎\Gamma‎ (d) m!}‎ z^m \int_{S} x^{m} dH_2(x),
\end{equation}
for all $|z|<‎\dfrac{1}{a}$. Given an integer $k$, treating $z$ as variable and equating the corresponding coefficients of $z^m$ ($\dfrac{‎\Gamma‎ (d+m)}{‎\Gamma‎ (d)}‎‎\neq 0‎$, for all $m$, as $d$ is positive) for each $m$ in the two sums, we obtain
\begin{equation*}
\int_{S} x^{m} dH_1(x)=\int_{S} x^{m} dH_2(x).
\end{equation*}
Hence we see that
\begin{equation*}
\int_{S} P(x) \ dH_1(x)=\int_{S} P(x) \ dH_2(x),
\end{equation*}
where $P(x)$ is any polynomial function of $x$, and similarly for any continuous function. Thus $H_1=H_2$.
\end{proof}

The following theorem enables us to represent a relationship between the AST of the distribution of RWA $S_{n: n_{1},...,n_{k-1}}$ and to those of $X_1,‎\ldots, X_k‎$. This theorem is the main theorem of \cite{Soltani} based on the AST.
\begin{theorem}
Let $S_{n: n_{1},...,n_{k-1}}$ be the RWA given in (1.2). Assume random variables $X_{1},\ldots, X_{k}$ are independent and
continuous with distribution functions $ F_{1},‎\ldots, F_{k}$, respectively. Then
\begin{equation*}
\mathcal{AS}[F_{S_{n: n_{1},...,n_{k-1}}}; \sum_{i=1}^{k}r_{i}](z)=\prod_{i=1}^{k}\mathcal{AS}\left[ F_{i};r_{i}\right] (z),
\end{equation*}
for all $|z|<‎\dfrac{1}{a}$.
\end{theorem}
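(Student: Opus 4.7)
The plan is to reduce Theorem 2.3 to the Soltani--Roozegar identity (1.3) via the bridge formula (2.1) that converts the AST into the GST. Everything on both sides of the claimed identity can be rewritten as a GST evaluated at $1/z$, after which (1.3) does the structural work; the remaining content is purely bookkeeping of powers of $z$.

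\textbf{Step 1.} First I would note that $\sum_{i=1}^{k} r_i = \sum_{i=1}^{k}(n_i - n_{i-1}) = n_k - n_0 = n$, so the exponent on the left-hand AST is exactly $n$. Applying (2.1) to the left-hand side, I rewrite
\begin{equation*}
\mathcal{AS}\bigl[F_{S_{n: n_{1},\ldots,n_{k-1}}};\,n\bigr](z) \;=\; \frac{1}{z^{n}}\,\mathcal{S}\bigl[F_{S_{n: n_{1},\ldots,n_{k-1}}};\,n\bigr]\!\bigl(1/z\bigr).
\end{equation*}
Since $|z| < 1/a$, the argument $1/z$ lies outside the support of every $F_i$ (and thus of the convolution/RWA distribution as well), so (1.3) applies and yields
\begin{equation*}
\mathcal{S}\bigl[F_{S_{n: n_{1},\ldots,n_{k-1}}};\,n\bigr]\!\bigl(1/z\bigr) \;=\; \prod_{i=1}^{k}\mathcal{S}\bigl[F_{i};\,r_i\bigr]\!\bigl(1/z\bigr).
\end{equation*}

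\textbf{Step 2.} Now I would invert (2.1) on each factor in the product. Writing (2.1) as $\mathcal{S}[F_i;r_i](1/z) = z^{r_i}\,\mathcal{AS}[F_i;r_i](z)$, the right side of the displayed identity becomes
\begin{equation*}
\prod_{i=1}^{k} z^{r_i}\,\mathcal{AS}[F_i;r_i](z) \;=\; z^{\sum_{i=1}^{k} r_i}\,\prod_{i=1}^{k}\mathcal{AS}[F_i;r_i](z) \;=\; z^{n}\,\prod_{i=1}^{k}\mathcal{AS}[F_i;r_i](z).
\end{equation*}
Combining with Step 1, the factor $1/z^{n}$ cancels against $z^{n}$, giving exactly $\prod_{i=1}^{k}\mathcal{AS}[F_i;r_i](z)$, which is the desired identity.

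\textbf{Main obstacle.} The computation itself is a one-line substitution; the only subtle point is making sure the domain conditions align. Specifically, I would be careful that when $|z|<1/a$ the value $1/z$ satisfies $|1/z|>a$, so it lies in $\mathbb{C}\cap\bigcap_{i=1}^{k}(\mathrm{supp}\,F_i)^{c}$ as required for (1.3); and conversely that the series expansion underlying $\mathcal{AS}$ converges throughout $|z|<1/a$. Once this compatibility is confirmed, no analytic difficulty remains, and uniqueness of $H$ from $\mathcal{AS}[H;d]$ (Theorem 2.1 above) justifies reading the resulting identity as an equality of transforms.
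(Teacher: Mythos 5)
Your proof is correct and is essentially the paper's own argument: the paper's proof of this theorem is the one-line instruction to apply Theorem 3.1 of Soltani and Roozegar (identity (1.3)) together with the bridge formula (2.1), which is exactly the substitution and power-of-$z$ bookkeeping you carry out. Your added care about the domain condition ($|z|<1/a$ implying $1/z$ lies outside $[-a,a]\supseteq\mathrm{supp}\,F_i$, with $z=0$ handled separately by the series definition) is a reasonable elaboration of what the paper leaves implicit.
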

\begin{proof}
Applying Theorem 3.1 of \cite{Soltani} and then (2.1) to conclude the result.
\end{proof}

\section{Families of RWA on some beta distributions}

In this section, we provide some families of RWA $S_{n: n_{1},...,n_{k-1}}$ distributions.
\begin{theorem}
Let $X_i, \ i=1,2,‎\ldots,k$ be independent random variables with $\tilde{b‎}eta(r_i+1/2,r_i+1/2‎‎)$ distributions. Then the RWA $S_{n: n_{1},...,n_{k-1}}$ has  $\tilde{b‎}eta(\sum_{i=1}^{k} r_i +1/2,\sum_{i=1}^{k} r_i‎‎+1/2)$ distribution.
\end{theorem}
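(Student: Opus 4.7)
The plan is to combine the factorization in Theorem 2.2 with the uniqueness statement in Theorem 2.1. By Theorem 2.2, the AST at parameter $R:=\sum_{i=1}^k r_i$ of $F_{S_{n:n_1,\ldots,n_{k-1}}}$ equals $\prod_{i=1}^k \mathcal{AS}[F_i;r_i](z)$. If I can evaluate each factor in closed form and recognize the product as the AST at parameter $R$ of a $\tilde{b}eta(R+1/2,R+1/2)$ law on the common interval $[a,b]$, then Theorem 2.1 identifies the two distributions and the result is proved.

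To compute $\mathcal{AS}[F_i;r_i](z)=\int_a^b(1-zx)^{-r_i} f_{r_i+1/2,\,r_i+1/2}(x)\,dx$, I would substitute $t=(x-a)/(b-a)$ and factor $(1-za)^{-r_i}$ out of the $(1-zx)^{-r_i}$ term. The integral then becomes $B(r_i+1/2,r_i+1/2)^{-1}\int_0^1 t^{r_i-1/2}(1-t)^{r_i-1/2}(1-wt)^{-r_i}\,dt$ with $w=z(b-a)/(1-za)$. Euler's formula (1.4) identifies this as
\[
\mathcal{AS}[F_i;r_i](z) \;=\; (1-za)^{-r_i}\,F_D^{(1)}\!\left(r_i,\,r_i+\tfrac12;\,2r_i+1;\,\tfrac{z(b-a)}{1-za}\right).
\]

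The main obstacle is now to turn this particular hypergeometric value into something whose $r_i$-dependence sits purely in an exponent, so that the Theorem~2.2 product collapses. For this I would invoke the quadratic transformation $F_D^{(1)}(a,a+\tfrac12;2a+1;w)=\bigl(2/(1+\sqrt{1-w})\bigr)^{2a}$, use the algebraic identity $1-w=(1-zb)/(1-za)$, and simplify $1+\sqrt{1-w}=(\sqrt{1-za}+\sqrt{1-zb})/\sqrt{1-za}$ to obtain the symmetric closed form
\[
\mathcal{AS}[F_i;r_i](z) \;=\; \left(\frac{2}{\sqrt{1-za}+\sqrt{1-zb}}\right)^{2r_i}.
\]

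With this closed form in hand, the rest is essentially bookkeeping: taking the product over $i=1,\ldots,k$ just adds the exponents, so Theorem~2.2 gives $\mathcal{AS}[F_{S_{n:n_1,\ldots,n_{k-1}}};R](z)=\bigl(2/(\sqrt{1-za}+\sqrt{1-zb})\bigr)^{2R}$, which is precisely the same closed form read off from a $\tilde{b}eta(R+1/2,R+1/2)$ variable at parameter $R$. Since both distributions have support in $[a,b]$ (the RWA is a convex combination of the $X_i$), Theorem~2.1 applies and forces $F_{S_{n:n_1,\ldots,n_{k-1}}}=\tilde{b}eta(R+1/2,R+1/2)$, completing the plan.
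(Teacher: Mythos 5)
Your proposal is correct, and all the computations check out: the substitution $t=(x-a)/(b-a)$ does turn $(1-zx)^{-r_i}$ into $(1-za)^{-r_i}(1-wt)^{-r_i}$ with $w=z(b-a)/(1-za)$, Euler's formula (1.4) identifies the resulting integral as $F_D^{(1)}(r_i,r_i+\tfrac12;2r_i+1;w)$, the quadratic transformation you quote is the standard one (Abramowitz--Stegun 15.1.13, valid because the second parameter exceeds the first by $1/2$ and the third equals twice the first plus one), and $1-w=(1-zb)/(1-za)$ gives exactly $\bigl(2/(\sqrt{1-za}+\sqrt{1-zb})\bigr)^{2r_i}$, which equals the paper's expression $\bigl[4/(2-(a+b)z+2\sqrt{1-(a+b)z+abz^2})\bigr]^{r_i}$ after squaring the denominator. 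The overall architecture (closed form for each factor, Theorem 2.2 to multiply, Theorem 2.1 to identify the limit law) is the same as the paper's, but the key integral is evaluated differently: the paper rescales to $[-1,1]$ and reads the answer off Table 3 of Kubo, Kuo and Namli as a black box, whereas you rescale to $[0,1]$ and derive the closed form from Euler's representation plus a quadratic hypergeometric transformation. Your route is more self-contained and has the added merit of being exactly the technique the paper uses for Theorem 3.2, so it effectively unifies the proofs of Theorems 3.1 and 3.2 under one computation; what the paper's citation buys in exchange is brevity and avoidance of the quadratic transformation. You are also slightly more careful than the paper at the final step, in that you explicitly note the support condition needed to invoke the uniqueness theorem.
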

\begin{proof}
By definition of AST of beta distribution on $[a,b]$, we have
\begin{equation*}
\mathcal{AS}\left[F; r_{i}\right] (z)=\int_{a}^{b} ‎\dfrac{1}{(1-zx)^{r_i}}‎ ‎\frac{(x-a)^{r_i-1/2} (b-x)^{r_i-1/2}}{B(r_i+1/2, r_i+1/2) (b-a)^{2r_i}}‎ dx.
\end{equation*}
By the change of variable $t=‎\dfrac{2x-(b+a)}{b-a}‎$, we obtain that
\begin{equation*}
\mathcal{AS}\left[F; r_{i}\right] (z)= ‎\dfrac{1}{[1-(a+b)z/2]^{r_i}}‎\int_{-1}^{1} ‎\dfrac{1}{(1-‎\dfrac{z(b-a)}{2-(a+b)z}‎t)^{r_i}}‎ ‎\frac{(t+1)^{r_i-1/2} (1-t)^{r_i-1/2}}{2^{2r_i} B(r_i+1/2, r_i+1/2) }‎ dt.
\end{equation*}
By Table 3 in \cite{Kubo}, it follows that
\begin{eqnarray*}
\mathcal{AS}\left[F; r_{i}\right] (z) &=& ‎\dfrac{1}{[1-(a+b)z/2]^{r_i}}‎ ‎\left[ ‎\dfrac{2}{1+‎\sqrt{1-‎\dfrac{z^2 (b-a)^2}{(2-(a+b)z)^2}‎}‎}‎ ‎\right] ^{r_i} \\
&=& ‎\left[ ‎\dfrac{4}{2-(a+b)z+2 ‎\sqrt{1-(a+b)z+ab z^2}‎}‎ ‎\right] ^{r_i}‎‎.
\end{eqnarray*}
Using Theorem 2.2 we obtain
\begin{equation*}
\mathcal{AS}[F_{S_{n: n_{1},...,n_{k-1}}}; \sum_{i=1}^{k}r_{i}](z)= \left[ ‎\dfrac{4}{2-(a+b)z+2 ‎\sqrt{1-(a+b)z+ab z^2}‎}‎ ‎\right] ^{\sum_{i=1}^{k} r_i},
\end{equation*}
which is the AST of $\tilde{b‎}eta(\sum_{i=1}^{k} r_i +1/2,\sum_{i=1}^{k} r_i‎‎+1/2)$ distribution. Therefore ${S_{n: n_{1},...,n_{k-1}}}$ has $\tilde{b‎}eta(\sum_{i=1}^{k} r_i +1/2,\sum_{i=1}^{k} r_i‎‎+1/2)$ distribution.
\end{proof}

Another general family of RWA on beta distributions on $[a,b]$ is presented in the following theorem.
\begin{theorem}
Let $X_i, \ i=1,2,‎\ldots,k$ be independent random variables with $\tilde{b‎}eta(s_i, r_i-s_i‎‎)$ distributions. Then the RWA $S_{n: n_{1},...,n_{k-1}}$ has  $\tilde{b‎}eta(\sum_{i=1}^{k} s_i, \sum_{i=1}^{k} r_i‎‎- \sum_{i=1}^{k} s_i)$ distribution.
\end{theorem}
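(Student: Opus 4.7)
The plan is to imitate the strategy used for Theorem 3.1: evaluate the AST $\mathcal{AS}[F_i; r_i](z)$ for each $X_i \sim \tilde{b}eta(s_i, r_i - s_i)$ on $[a,b]$, take the product via Theorem 2.2, and recognize the result as the AST of the claimed $\tilde{b}eta(\sum s_i, \sum r_i - \sum s_i)$ law. Uniqueness (Theorem 2.1) then delivers the distributional equality.

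To compute $\mathcal{AS}[F_i; r_i](z)$, substitute $t = (x-a)/(b-a)$ in the defining integral and use the factorization $1 - zx = (1-za)(1 - w t)$, where $w = z(b-a)/(1-za)$. This reduces the problem to evaluating a constant multiple of $\int_0^1 t^{s_i - 1}(1-t)^{r_i - s_i - 1}(1 - wt)^{-r_i}\,dt$, which by the Euler integral representation (1.4) is proportional to $F_D^{(1)}(r_i, s_i; r_i; w)$. Because the first and third parameters of this hypergeometric function coincide, the series definition collapses: $(r_i)_n/(r_i)_n = 1$, leaving $\sum_{n \geq 0}(s_i)_n w^n / n! = (1-w)^{-s_i}$. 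Using $1 - w = (1-zb)/(1-za)$, one obtains the clean formula $\mathcal{AS}[F_i; r_i](z) = (1-zb)^{-s_i}(1-za)^{-(r_i - s_i)}$.

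Multiplying over $i = 1,\ldots,k$ via Theorem 2.2 yields $\mathcal{AS}[F_{S_{n: n_1,\ldots,n_{k-1}}}; \sum r_i](z) = (1-zb)^{-\sum s_i}(1-za)^{-(\sum r_i - \sum s_i)}$, and the same two-line calculation applied to a generic $\tilde{b}eta(p,q)$ law on $[a,b]$ at order $p+q$ shows that the right-hand side is exactly the AST of $\tilde{b}eta(\sum s_i, \sum r_i - \sum s_i)$ at order $\sum r_i$. Theorem 2.1 then closes the argument. The main obstacle is spotting the degeneration of the Gauss hypergeometric function when its first and third parameters agree; once that is noticed, the rest is routine bookkeeping analogous to the $[a,b] \to [-1,1]$ reduction performed in Theorem 3.1.
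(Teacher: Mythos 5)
Your proposal is correct and follows essentially the same route as the paper: the substitution $t=(x-a)/(b-a)$, the identification of the resulting integral with $F_{D}^{(1)}(r_i,s_i;r_i;\cdot)$ via Euler's representation (1.4), the collapse of that function to $(1-w)^{-s_i}$ because its first and third parameters coincide, and the final product over $i$ via Theorem 2.2 with uniqueness from Theorem 2.1. No substantive differences to report.
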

\begin{proof}
By using AST of $\tilde{b‎}eta(s_i, r_i-s_i‎‎)$ distribution, we have
\begin{equation*}
\mathcal{AS}\left[F; r_{i}\right] (z)= ‎\int_{a}^{b} ‎\dfrac{1}{(1-zx)^{r_i}}‎ ‎\frac{(x-a)^{s_i-1} (b-x)^{r_i- s_i-1}}{B(s_i, r_i-s_i) (b-a)^{r_i-1}}‎ dx.
\end{equation*}
Changing of variable $t=‎\dfrac{x-a}{b-a}‎$ implies
\begin{equation*}
\mathcal{AS}\left[F; r_{i}\right] (z)= ‎\dfrac{1}{(1-za)^{r_i}} ‎\int_{0}^{1} ‎\dfrac{1}{(1-‎\dfrac{z(b-a)}{1-za}‎t)^{r_i}}‎ ‎\frac{t^{s_i-1} (1-t)^{r_i- s_i -1}}{B(s_i, r_i-s_i)}‎ dt.
\end{equation*}
Using (1.4), it follows that
\begin{eqnarray*}
\mathcal{AS}\left[F; r_{i}\right] (z) &=& ‎\dfrac{1}{(1-za)^{r_i}}‎ F_{D}^{(1)} (r_i,s_i;r_i;‎\dfrac{z(b-a)}{1-za}‎) \\
&=& ‎\dfrac{1}{(1-za)^{r_i}} ‎\dfrac{1}{(1-‎\dfrac{z(b-a)}{1-za})^{s_i}}‎ \\
&=& ‎\dfrac{1}{(1-za)^{r_i-s_i}} ‎\dfrac{1}{(1-zb)^{s_i}}.
\end{eqnarray*}
From Theorem 2.2, we get
\begin{equation*}
\mathcal{AS}[F_{S_{n: n_{1},...,n_{k-1}}}; \sum_{i=1}^{k}r_{i}](z)= \dfrac{1}{(1-za)^{\sum_{i=1}^{k} (r_i-s_i)}} ‎\dfrac{1}{(1-zb)^{\sum_{i=1}^{k} s_i}}.
\end{equation*}
which is the AST of $\tilde{b‎}eta(\sum_{i=1}^{k} s_i ,\sum_{i=1}^{k} r_i‎‎-\sum_{i=1}^{k} s_i)$ distribution. Hence ${S_{n: n_{1},...,n_{k-1}}}$ has $\tilde{b‎}eta(\sum_{i=1}^{k} s_i ,\sum_{i=1}^{k} r_i‎‎-\sum_{i=1}^{k} s_i)$ distribution.
\end{proof}

The following corollary is an immediate consequence of Theorem 3.2.
\begin{Corollary}
For independent random variables $X_i, \ i=1,2,‎\ldots, k‎$ with $\tilde{b‎}eta(r_i/2, r_i/2)$ distributions, the RWA $S_{n: n_{1},...,n_{k-1}}$ has $\tilde{b‎}eta(\sum_{i=1}^{k} r_i/2, \sum_{i=1}^{k} r_i/2)$ distribution.
\end{Corollary}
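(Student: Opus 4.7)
The corollary is explicitly flagged as an immediate consequence of Theorem 3.2, so the natural plan is simply to specialize that theorem. I would fix the parameters $s_i := r_i/2$ for $i = 1, 2, \ldots, k$. Then the two shape parameters of the assumed $\tilde{b}eta(s_i, r_i - s_i)$ distribution for $X_i$ become $s_i = r_i/2$ and $r_i - s_i = r_i/2$, which matches exactly the hypothesis of Corollary 3.1 that $X_i \sim \tilde{b}eta(r_i/2, r_i/2)$.

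With this substitution in hand, I would invoke Theorem 3.2 directly. Its conclusion states that $S_{n: n_1,\ldots,n_{k-1}}$ has distribution $\tilde{b}eta\bigl(\sum_{i=1}^{k} s_i,\; \sum_{i=1}^{k} r_i - \sum_{i=1}^{k} s_i\bigr)$. Under $s_i = r_i/2$ this simplifies to $\tilde{b}eta\bigl(\sum_{i=1}^{k} r_i/2,\; \sum_{i=1}^{k} r_i/2\bigr)$, which is exactly the asserted distribution. No additional computation of AST is required, since the heavy lifting (computing $\mathcal{AS}[F; r_i](z)$ via the change of variable $t = (x-a)/(b-a)$, invoking the Euler integral representation (1.4), and multiplying via Theorem 2.2) has already been carried out in the proof of Theorem 3.2.

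There is essentially no obstacle: the only thing to check is that $s_i = r_i/2$ is admissible, i.e., that $s_i > 0$ and $r_i - s_i > 0$, which holds because $r_i = n_i - n_{i-1} \geq 1$ is a positive integer by construction. Hence the corollary follows at once from Theorem 3.2.
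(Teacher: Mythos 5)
Your proof is correct and is exactly the argument the paper intends: the paper states Corollary 3.1 as an immediate consequence of Theorem 3.2, and your specialization $s_i = r_i/2$ is the obvious substitution that makes this work. The admissibility check at the end is a nice touch but the approach is the same.
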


The following theorem of \cite{Roozegar} comes as a corollary to Corollary 3.1.
\begin{Corollary}
For every integer $n‎‎\geq 2‎‎$, a power semicircle distribution with shape parameter $\theta=(n-3)/2$ and any positive range parameter is a RWA $S_n$ distribution.
\end{Corollary}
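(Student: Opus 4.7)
The plan is to recognize the final statement as a direct specialization of Corollary 3.1 to the unweighted RWA $S_n$ of (1.1), followed by a translation between the power semicircle parameterization and the extended beta parameterization $\tilde{b}eta$. I do not anticipate any genuine computational obstacle; the whole argument is parameter matching on top of Corollary 3.1.

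First, I would rewrite the target law in $\tilde{b}eta$ form. Recalling that $PS(\theta,\sigma)$ is by definition $\tilde{b}eta(\theta+3/2,\theta+3/2)$ on $[-\sigma,\sigma]$, substituting $\theta=(n-3)/2$ converts the target into $\tilde{b}eta(n/2,n/2)$ on $[-\sigma,\sigma]$. This is the distribution I need to realize as some $S_n$.

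Next, I would identify the correct specialization of Corollary 3.1. The RWA $S_n$ of (1.1) is precisely $S_{n:1,2,\ldots,n-1}$, i.e.\ the case $k=n$ and $n_j=j$ for every $j$, so that $r_j=n_j-n_{j-1}=1$ for all $j$. With this choice Corollary 3.1 applies provided I take $X_1,\ldots,X_n$ independent with common $\tilde{b}eta(1/2,1/2)$ distribution on $[-\sigma,\sigma]$, which is the arcsine law $PS(-1,\sigma)$. Then $r_i/2=1/2$ matches what Corollary 3.1 requires, and its conclusion delivers $S_n\sim \tilde{b}eta\!\left(\sum_{i=1}^n r_i/2,\ \sum_{i=1}^n r_i/2\right)=\tilde{b}eta(n/2,n/2)$ on $[-\sigma,\sigma]$, i.e.\ $PS((n-3)/2,\sigma)$. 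Since $\sigma>0$ was arbitrary, every positive range parameter is realized.

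The hard part, to the extent there is any, is purely notational bookkeeping: one must keep the three parameterizations aligned (power semicircle shape $\theta$, the equal beta shape $p=q=\theta+3/2$, and the sample size $n$), and must recognize that $S_n$ of (1.1) coincides with $S_{n:1,\ldots,n-1}$ so that Corollary 3.1 can be invoked with all $r_i=1$.
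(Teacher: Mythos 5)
Your proposal is correct and is essentially identical to the paper's own proof, which likewise takes $k=n$, $r_i=1$, $p=q=1/2$, $a=-\sigma$, $b=\sigma$ and invokes Corollary 3.1. The only difference is that you spell out the parameter translation $\theta+3/2=n/2$ explicitly, which the paper leaves implicit.
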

\begin{proof}
Consider $k=n$, $r_i=1$ for $i=1,2,‎\ldots, k‎$, $p=q=1/2$, $a=-\sigma$ and $b=\sigma$, then use Corollary 3.1 to obtain the result.
\end{proof}

\section{Examples}

Theorem 3.1 and Theorem 3.2 in Section 3 give general results on distribution of RWA $S_{n: n_{1},...,n_{k-1}}$ on some beta distributions on $[a,b]$. In this section we present some special examples of distributions that are randomly weighted average distributions given in (1.2). Other examples of RWA $S_n$ distributions in case of $r_1=r_2=‎\cdots=r_k=1‎$ and $k=n$ can be found in \cite{Roozegar}. For the first and second examples, we use the results of \cite{Kubo}.\\
\newline
\noindent\textbf{Example 4.1.} Let $X_1$, $X_2$ and $R$ be independent random variables with $\tilde{b‎}eta(1/2, 1/2)$, $\tilde{b‎}eta(m-1/2, m-1/2)$ and $beta(1,m-1)$ distributions, respectively. Then randomly weighted average $S=R X_1+(1-R)X_2$ has $\tilde{b‎}eta(m-1/2, m-1/2)$ distribution. Its AST can be written as
\begin{equation*}
\mathcal{AS}\left[ F_{S};m \right] (z)=\frac{1}{‎\sqrt{1-z(a+b)+ ab z^2}‎} ‎\left[ ‎\dfrac{4}{2-z(a+b)+2 ‎\sqrt{1-z(a+b)+ ab z^2}}‎‎ \right] ^{m-1}.‎‎
\end{equation*}
\newline

\noindent\textbf{Example 4.2.} Let $X_1$, $X_2$ and $R$ be independent random variables with $\tilde{b‎}eta(3/2, 1/2)$, $\tilde{b‎}eta(m-1/2, m-1/2)$ and $beta(1,m-1)$ distributions, respectively. Then randomly weighted average $S=R X_1+(1-R)X_2$ has $\tilde{b‎}eta(m+1/2, m-1/2)$ distribution. Its AST is given by
\begin{equation*}
\mathcal{AS}\left[ F_{S};m \right] (z)=\frac{2}{‎\sqrt{1-z ab+ \sqrt{1-z(a+b)+ ab z^2}}}‎ ‎\left[ ‎\dfrac{4}{2-z(a+b)+2 ‎\sqrt{1-z(a+b)+ ab z^2}}‎‎ \right] ^{m-1}.‎‎
\end{equation*}
\newline

\noindent\textbf{Example 4.3.} Let $X_1, X_2, ‎\ldots, ‎X_k$ be independent and identically distributed (i.i.d) random variables with common $U(a,b)$ distribution and random vector $(V_1,‎\ldots, V_k)‎$ has $Dir (2,2,‎\ldots, 2‎)$ distribution. Then by Corollary 3.1, the RWA $S_{n: n_{1},...,n_{k-1}}$ has $\tilde{b‎}eta(k, k)$ distribution.
\newline

\noindent\textbf{Example 4.4.} Let $X_1, X_2, ‎\ldots, ‎X_k$ be i.i.d random variables with common semicircle (Wigner) distribution on $(-\sigma, \sigma)$ and random vector $(V_1,‎\ldots, V_k)‎$ has $Dir (3,3,‎\ldots, 3‎)$ distribution. Then by Corollary 3.1, the RWA $S_{n: n_{1},...,n_{k-1}}$ has $\tilde{b‎}eta(3k/2, 3k/2)$ distribution.
\newline

\end{document}